\documentclass[12pt]{amsart}

\usepackage{amsmath, amssymb}
\usepackage{mathtools}
\usepackage{bbm}
\usepackage[utf8]{inputenc}
\usepackage{calrsfs}
\DeclareMathAlphabet{\pazocal}{OMS}{zplm}{m}{n}
\usepackage{color}
\usepackage{overpic}
\usepackage{hyperref}
\usepackage{soul}
\usepackage{dsfont}
\usepackage{enumitem}

\usepackage[english]{babel}

\usepackage{array}

\usepackage{caption}
\makeatletter
\@namedef{subjclassname@2020}{\textup{2020} Mathematics Subject Classification}
\makeatother

\usepackage{float}

\newtheorem{theorem}{Theorem}[section]

\newtheorem{cor}[theorem]{Corollary}

\theoremstyle{definition}

\theoremstyle{remark}

\numberwithin{equation}{section}

\allowdisplaybreaks

\begin{document}

\title[Discrete Hyperbolic Secant Distributions]{Discrete Hyperbolic Secant Distributions}



\author{Leonard J. E. Pleschberger}
\address{Leonard Jobst Eberhard Pleschberger, Neptunstra\ss e 18, D\"usseldorf D-40223 \& Sechshauser Stra\ss e 24/1/15, A-1150 Vienna.}
\email{leonard.pleschberger@gmail.com}
\keywords{Discrete hyperbolic secant distribution, elliptic lambda function, Ramanujan's notebooks, lemniscate constant, Gau\ss' constant, Poisson summation, Mellin transform, Riemann zeta function}

\date{\today}

\maketitle

\begin{abstract}
We introduce a family of discrete hyperbolic secant distributions on $\mathbb{Z}$, whose normalizing constants arise from series values calculated by Ramanujan and are expressed in terms of Gau\ss' constant $G=\varpi/\pi$, where $\varpi=\Gamma^2(1/4)/(2\sqrt{2\pi})$ is the lemniscate constant. Using the elliptic lambda-star function $\lambda^*$, we construct scaled versions of these distributions parametrized by $\sqrt{r}$ and $1/\sqrt{r}$ for $r \in \mathbb{N}$. For the first such distribution, we compute the moments up to the eighth degree in closed form via Poisson summation, exploiting that the hyperbolic secant is a fixed point of the $-2\pi i$-Fourier transform. As a byproduct, we obtain closed-form values for series of odd powers of the hyperbolic secant up to the ninth degree, e.\,g.\ $\sum_{k \in \mathbb{Z}} \operatorname{sech}^3(\pi k) = (G^3+G)/\sqrt{2}$, and reinterpret several classical Ramanujan series probabilistically. Finally, we apply our results to evaluate a contour integral, on the critical line, of the product of Dirichlet's beta, gamma, and Riemann zeta functions.
\end{abstract}

\vspace{-0.5cm}

\section{Introduction}

In Section 2, we introduce a family of discrete hyperbolic secant distributions. The normalizing constants are derived from values of series calculated by Ramanujan. Using Gau\ss' constant $G=\varpi/\pi$ with the lemniscate constant $\varpi=\Gamma^2(1/4)/(2\sqrt{2 \pi})$, we can represent these distributions in very basic terms, e.\,g.
$$
\left(\mathbb{P} (X = k)\right)_{k \in \mathbb{Z}} = \left(\frac{\operatorname{sech}\left(\pi k\right)}{\sqrt{2}G} \right)_{k\in\mathbb{Z}}
$$
and versions with a scaling factor of $\sqrt{r}$ and $1/\sqrt{r}$ applied to $\pi k$. We use Ramanujan series and apply the lambda-star function $\lambda^*$ which yields the required singular values of the occurring elliptic integrals.\\

In Section 3, we calculate the moments up to the eighth degree for the first hyperbolic secant distribution, e.\,g. $\mathbb{E}\left[X^2 \right] = \operatorname{Var}(X) = (G/2)^2$. Therefrom, we derive the values of series of odd multiplicities of the hyperbolic secant up to the ninth degree, e.\,g.
$$
\sum_{k\in\mathbb{Z}} \mathrm{sech}^3 (\pi k) = \left(G^3 + G\right)/\sqrt{2}.
$$
In this case, the hyperbolic secant is a fix-point of the $-2\pi i$-Fourier transform which allows us comfortable calculations via Poisson summation.\\

In Section 4, we give further statistical interpretations of some Ramanujan series in terms of the second hyperbolic secant distribution, e.\,g. $\mathbb{E}\left[(2Y+1)^2 \right] = 2G^2$.\\

In Section 5, we apply the first hyperbolic secant distribution to contour integrals. We calculate a contour integral for a product $\Xi$ of Dirichlet's beta function, the gamma function and Riemann's zeta function on the critical line, viz.
$$
\int_{1/2 - i\cdot \infty}^{1/2 + i\cdot \infty} \pi^{-(s+2)} \beta(s+2) \Gamma(s+2) \zeta(s) \, \mathrm{d}s = i \left( \frac{\varpi}{\sqrt{2}} \mathbb{E}\left[X^2 \right] - \frac{\pi}{8} \right) = i \left( \frac{\varpi}{\sqrt{2}} \left(\frac{G}{2}\right)^2 - \frac{\pi}{8} \right).
$$
We use the Mellin transform and the residue theorem.

\section{Basic Discrete Hyperbolic Secant Distributions}

We introduce a family of six discrete hyperbolic secant distributions through their probability mass functions. These functions are normalized by series values given in \cite{ramanujan_1991} by using the lemniscate modulus $k_1 = 1/\sqrt{2}$. We represent the normalizing constants in terms of the Gau\ss ian constant $G:=\varpi/\pi$ with the lemniscate constant $\varpi:=\Gamma^2(1/4)/(2\sqrt{2 \pi})$. This results in particularly simple expressions.

\begin{theorem} \label{thm:pmfs}
We tabulate six discrete hyperbolic secant probability distributions:

\renewcommand{\arraystretch}{2}
\begin{table}[h]
    \centering
    \begin{tabular}{c|c|c|c}
PMF  &   Variable & Constant  &  Law \\
\hline
$(p_n)_{n \in \mathbb{N}_0}$ & $X_u$ & $C_{1,u}$ & $\frac{2}{\sqrt{2}G  + 1} \operatorname{sech} \left(\pi n\right)$\\
$(q_k)_{k \in \mathbb{Z}}$ & $X_b$ & $C_{1,b}$ &  $\frac{1}{\sqrt{2}G} \operatorname{sech}\left(\pi k\right)$\\
$(r_n)_{n \in \mathbb{N}_0}$ &  $Y_u$ & $C_{2,u}$ &  $\frac{2}{G} \operatorname{sech} \left(\frac{\pi}{2} (2n+1)\right)$\\
$(s_k)_{k \in \mathbb{Z}}$ &  $Y_b$ & $C_{2,b}$ &  $\frac{1}{G} \operatorname{sech}\left(\frac{\pi}{2} (2k+1)\right)$\\
$(t_n)_{n \in \mathbb{N}_0}$ & $Z_u$ & $C_{3,u}$ &  $\frac{2}{(1+\sqrt{2})G + 1} \operatorname{sech}\left(\frac{\pi}{2} n\right)$\\
$(u_k)_{k \in \mathbb{Z}}$ &  $Z_b$ & $C_{3,b}$ & $\frac{1}{(1+\sqrt{2})G}  \operatorname{sech}\left(\frac{\pi}{2} k\right)$
\end{tabular}
\end{table}

In full length, we name them the first, second and third discrete uni- or bilateral hyperbolic secant distributions and omit adjectives if the context is clear.
\end{theorem}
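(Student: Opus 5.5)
The plan is to reduce all six statements to three bilateral series evaluations:
\begin{align*}
S_1 &:= \sum_{k\in\mathbb{Z}}\operatorname{sech}(\pi k)=\sqrt2\,G,\\
S_2 &:= \sum_{k\in\mathbb{Z}}\operatorname{sech}\left(\tfrac{\pi}{2}(2k+1)\right)=G,\\
S_3 &:= \sum_{k\in\mathbb{Z}}\operatorname{sech}\left(\tfrac{\pi}{2}k\right)=(1+\sqrt2)\,G .
\end{align*}
Positivity of every listed mass is clear, so only normalization needs proof. The unilateral laws then follow by symmetry of $\operatorname{sech}$. For $X$ and $Z$ the $k=0$ term equals $1$, so $\sum_{n\ge0}=(S+1)/2$, which produces the constants $2/(\sqrt2G+1)$ and $2/((1+\sqrt2)G+1)$. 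For $Y$ the map $k\mapsto -k-1$ pairs the negative indices with the nonnegative ones, so $\sum_{n\ge0}=S_2/2$, giving $2/G$.

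For $S_1$ and $S_2$ I would use the classical Fourier expansions of the Jacobian elliptic functions (the source of Ramanujan's values), with nome $q=e^{-\pi K'/K}$:
\begin{align*}
\operatorname{dn}(u) &= \frac{\pi}{2K}+\frac{2\pi}{K}\sum_{n\ge1}\frac{q^n}{1+q^{2n}}\cos\frac{n\pi u}{K},\\
\operatorname{cn}(u) &= \frac{2\pi}{kK}\sum_{n\ge0}\frac{q^{n+1/2}}{1+q^{2n+1}}\cos\frac{(2n+1)\pi u}{2K}.
\end{align*}
Set $u=0$ and use $2q^{m}/(1+q^{2m})=\operatorname{sech}(m\pi K'/K)$ for real $m$. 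This yields
$$
\sum_{k\in\mathbb{Z}}\operatorname{sech}\left(\frac{k\pi K'}{K}\right)=\frac{2K}{\pi},\qquad \sum_{k\in\mathbb{Z}}\operatorname{sech}\left(\frac{(2k+1)\pi K'}{2K}\right)=\frac{2kK}{\pi}.
$$
Now specialize to the lemniscatic modulus $k_1=1/\sqrt2=\lambda^*(1)$. There $K'=K$ and $K(1/\sqrt2)=\Gamma^2(1/4)/(4\sqrt\pi)=\varpi/\sqrt2$. Hence $S_1=2K/\pi=\sqrt2\,\varpi/\pi=\sqrt2 G$, and $S_2=\sqrt2K/\pi=\varpi/\pi=G$.

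For $S_3$ no further elliptic input is needed. Split the index $k$ by parity: even $k=2j$ gives $\operatorname{sech}(\pi j)$, and odd $k=2j+1$ gives $\operatorname{sech}(\tfrac{\pi}{2}(2j+1))$. Therefore $S_3=S_1+S_2=(\sqrt2+1)G$. As a cross-check consistent with the $\lambda^*$ viewpoint of the paper, one can apply the first identity at $K'/K=1/2$. The ascending Landen transformation from $k_1$ gives $K=(1+k_1)K(k_1)=(1+\sqrt2)\varpi/2$, hence $2K/\pi=(1+\sqrt2)G$, which agrees.

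The only genuinely nontrivial step is the justification of the two Fourier expansions and of the evaluation $K(1/\sqrt2)=\varpi/\sqrt2$. I would cite these as standard (Whittaker–Watson, or Ramanujan's notebooks as in \cite{ramanujan_1991}) rather than reprove them. The step needing most care is keeping the nome conventions straight, namely $q^{1/2}$ in the $\operatorname{cn}$ series and the factor $1/k$. I would check the result numerically, using $G\approx0.8346$ and $S_2\approx 2(0.3985+0.0179+0.0008)\approx0.834$.
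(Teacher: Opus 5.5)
Your proposal is correct and follows essentially the same route as the paper. Your $u=0$ specializations of the $\operatorname{dn}$ and $\operatorname{cn}$ Fourier series are exactly Ramanujan's Entries 17(i) and 16(ix), which the paper cites and evaluates at $k_1=1/\sqrt{2}$, and both arguments obtain the third law by splitting $\operatorname{sech}(\tfrac{\pi}{2}k)$ by the parity of $k$; the only differences are that you get $C_{2,b}$ directly from the symmetry $k\mapsto -k-1$ rather than as $S_3-S_1$, and you add a Landen cross-check the paper does not include.
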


\begin{proof}
The normalizing constants can be deduced by the astonishing work of Ramanujan: He wrote in his \textit{Entry 17 (i)} in Chapter $17$ of his second notebook [in modern notation] his identity
\begin{equation}\label{form:ramanujan_1}
1 + 2 \sum_{n=1}^\infty \operatorname{sech}\left(\frac{K'(k)}{K(k)} \pi n\right) = \frac{2}{\pi} K(k), \quad \mathrm{with} \quad K(k) = \int_0^{\pi/2} \frac{\mathrm{d}\vartheta}{\sqrt{1-k^2\sin^2(\vartheta)}},
\end{equation}
Jacobi's complete elliptic integral of the first kind. We use the usual notation $K'(k) = K(k') = K(\sqrt{1-k^2})$ with the elliptic modulus $k$. The identity is proven by Berndt, cf. p. 138 of \cite{ramanujan_1991}. The $\lambda^\star$-function gives the value for the elliptic modulus $\lambda^\star(r)=k_r$ for which
\begin{equation}\label{form:elliptic_modulus}
\frac{K'(\lambda^\star(r))}{K(\lambda^\star(r))} = \sqrt{r} \quad \mathrm{with} \quad k_1 = \lambda^\star(1) = \frac{1}{\sqrt{2}}, \quad K(k_1) = \frac{\varpi}{\sqrt{2}}
\end{equation}
holds. For all the analysis, cf. the classical text  \cite{whittaker_1920}. Hence, by taking $r=1$, we get
$$
\sum_{n=0}^\infty \operatorname{sech}(\pi n) = \frac{\varpi}{\sqrt{2} \cdot \pi} + \frac{1}{2} = \frac{\sqrt{2} G + 1}{2} \ \Rightarrow \ C_{1,u} = \frac{2}{\sqrt{2}G + 1} \ \Rightarrow \ C_{1,b} = \frac{1}{\sqrt{2}G}
$$
due to the symmetry. In \textit{Entry 16 (ix)} of Chapter 17 of his second notebook, Ramanujan stated [in modern notation] the identity
$$
\sum_{n=0}^\infty \operatorname{sech}\left(\frac{\pi}{2} (2n+1)\right) = \frac{\varpi}{2 \pi} = \frac{G}{2}, \quad \Rightarrow \quad C_{2,u} = \frac{2}{G}.
$$
This is proven by Berndt on p. 134 of \cite{ramanujan_1991} and by Campbell in Theorem 7 of \cite{campbell_2023}. Fom  the absolute convergence follows
\begin{align*}
 & \sum_{n=0}^\infty \operatorname{sech}\left(\frac{\pi}{2}\cdot 2n \right) + \sum_{n=0}^\infty \operatorname{sech}\left(\frac{\pi}{2} \cdot (2n + 1)\right) = \frac{(1+\sqrt{2})G + 1}{2}.
\end{align*}
Hence, we get
$$
C_{3,u} = \frac{2}{(1+\sqrt{2})G + 1} \quad \mathrm{and} \quad C_{3,b} = \frac{1}{(1+\sqrt{2})G},
$$
where the latter was proven on p. 137 in \cite{ramanujan_1991}. Finally, 
$$
\sum_{k \in \mathbb{Z}} \operatorname{sech}\left(\frac{\pi}{2}(2k+1)\right) = \sum_{k \in \mathbb{Z}} \operatorname{sech}\left(\frac{\pi}{2} \cdot k\right) - \sum_{k\in \mathbb{Z}} \operatorname{sech} \left(\frac{\pi}{2} \cdot 2k\right) = G, \ \Rightarrow \ C_{2,b} = \frac{1}{G}
$$
finishes the proof.
\end{proof}

\vspace{1cm}

If one wanted to exaggerate, one could interpret $1 + \sqrt{2}$  as the silver ratio, but we tend to regard the expression merely as the sum of $1$ and $\sqrt{2}$.\\

The proof of the last theorem just handled the lemniscate case for $r=1$ in (\ref{form:elliptic_modulus}). In the following, we use the notations introduced in this proof. Now, let $r \ge 1$. By Ramanujan's formula \ref{form:ramanujan_1}, we immediately get a scaled family of discrete hyperbolic secant distributions.

\begin{cor}
Let $r \in \mathbb{N}$. Then, the formula
$$
\left(p_{r,k}\right)_{k\in\mathbb{Z}} = \mathbb{P}(X=k) \left(\frac{\pi}{2K(k_r)}\operatorname{sech} (\sqrt{r}\pi k)\right)_{k\in\mathbb{Z}}
$$
is the probability mass function of a random variable $X \sim \mathsf{Sech}(\sqrt{r})$. We call this distribution the discrete bilateral hyperbolic secant distribution with scaling factor $\sqrt{r}$.
\end{cor}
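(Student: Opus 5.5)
We have to check two things: each $p_{r,k}$ is nonnegative, and the values sum to $1$ over $\mathbb{Z}$. Nonnegativity is immediate. Since $\operatorname{sech}>0$ on $\mathbb{R}$ and $K(k)>0$ for every modulus $0<k<1$, every $p_{r,k}$ is strictly positive. Absolute convergence is also clear, because $\operatorname{sech}(\sqrt{r}\pi k)\le 2e^{-\sqrt{r}\pi|k|}$. This lets us rearrange the bilateral series freely.

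For the normalization we use Ramanujan's identity (\ref{form:ramanujan_1}) with the modulus $k=k_r=\lambda^\star(r)$. By (\ref{form:elliptic_modulus}) the ratio $K'(k_r)/K(k_r)$ equals $\sqrt{r}$, so the left-hand side of (\ref{form:ramanujan_1}) becomes $1+2\sum_{n\ge1}\operatorname{sech}(\sqrt{r}\pi n)$. Because $\operatorname{sech}$ is even and $\operatorname{sech}(0)=1$, this is exactly $\sum_{k\in\mathbb{Z}}\operatorname{sech}(\sqrt{r}\pi k)$. Hence
$$\sum_{k\in\mathbb{Z}}\operatorname{sech}(\sqrt{r}\pi k)=\frac{2}{\pi}K(k_r),$$
and multiplying by $\pi/(2K(k_r))$ gives total mass $1$. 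As a consistency check, $r=1$ gives $K(k_1)=\varpi/\sqrt{2}$, and the normalizing constant becomes $\pi/(\sqrt{2}\varpi)=1/(\sqrt{2}G)=C_{1,b}$, as in Theorem \ref{thm:pmfs}.

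The only point needing care is that $\lambda^\star(r)$ is well defined with $k_r\in(0,1)$. The map $k\mapsto K'(k)/K(k)$ is continuous and strictly decreasing from $+\infty$ at $k\to0^+$ to $0$ at $k\to1^-$, so it attains the value $\sqrt{r}$ exactly once. This is the classical fact cited from \cite{whittaker_1920}, and I would simply invoke it. The restriction $r\in\mathbb{N}$ is not really needed for this argument; any real $r>0$ works. The positivity of $K(k_r)$ then makes the constant finite and positive, which completes the proof.
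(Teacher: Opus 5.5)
Your proposal is correct and is the paper's own argument. The paper says the corollary follows immediately from Ramanujan's Entry 17 (i) at the modulus $k_r=\lambda^\star(r)$, where $K'(k_r)/K(k_r)=\sqrt{r}$ gives $\sum_{k\in\mathbb{Z}}\operatorname{sech}(\sqrt{r}\pi k)=2K(k_r)/\pi$; your extra details (positivity, absolute convergence, existence and uniqueness of $k_r$ from the strict monotonicity of $K'/K$, and that any real $r>0$ would do) are correct and spell out what the paper leaves implicit.
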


We tabular some interesting singular values $k_r$ with corresponding probability mass functions $(p_{r,k})_{k \in \mathbb{Z}}$

\renewcommand{\arraystretch}{2}
\begin{table}[h]
    \centering
    \begin{tabular}{c|c|r}
$p_{r,k}$  &   $ \lambda^*(r) = k_r$ & $\frac{\pi}{2K(k_r)}\operatorname{sech} (\sqrt{r}\pi k)$\\
\hline
$\mathbf{p_{1,k}}$  & $\frac{1}{\sqrt{2}}$ & $\frac{1}{\sqrt{2}G} \operatorname{sech} (\pi k)\quad \ $\\
$p_{2,k}$  & $\sqrt{2}-1$ & $\frac{4 \pi\sqrt{2\pi}}{\sqrt{\sqrt{2}+1} \cdot \Gamma(1/8)\Gamma(3/8)} \operatorname{sech} (\sqrt{2}\pi k)$\\
$p_{3,k}$  & $\frac{\sqrt{2}\left(\sqrt{3}-1\right)}{4}$ & $\frac{\sqrt[4]{2^3} \cdot \pi^2 }{\sqrt[4]{3} \cdot \Gamma^3(1/3)}\operatorname{sech} (\sqrt{3}\pi k)$\\
$\mathbf{p_{4,k}}$  & $3-2\sqrt{2}$ & $\frac{2}{(1 + \sqrt{2})G}\operatorname{sech} (2\pi k)\quad$\\
$p_{5,k}$  & $\frac{1}{\sqrt{2}} - \sqrt{\sqrt{5}-2}$ & $\frac{2\pi \sqrt{10 \pi}}{\sqrt[4]{2+\sqrt{5}} \sqrt{\Gamma(1/20)\Gamma(3/20)\Gamma(7/20)\Gamma(9/20)}} \operatorname{sech} (\sqrt{5}\pi k)$\\
$p_{6,k}$  & $\left(2-\sqrt{3}\right)\left(\sqrt{3}-\sqrt{2}\right)$ & $\frac{4\pi \sqrt{6 \pi}}{\sqrt{(1 +\sqrt{2}+\sqrt{6}) \cdot \Gamma(1/24)\Gamma(5/24)\Gamma(7/24)\Gamma(11/24)}} \operatorname{sech} (\sqrt{6}\pi k)$\\
$p_{7,k}$  & $\frac{3-\sqrt{7}}{8}$ & $\frac{2\sqrt[4]{7} \cdot \pi^2}{\Gamma(1/7)\Gamma(2/7)\Gamma(4/7)} \operatorname{sech} (\sqrt{7}\pi k)$\\
$\mathbf{p_{9,k}}$  & $\frac{\left(\sqrt{2}-\sqrt[4]{3}\right)\left(\sqrt{3}-1\right)}{2}$ & $\frac{3}{\left(\sqrt[4]{3^3} + \sqrt[4]{3} \right) G} \operatorname{sech} (3\pi k)\quad$\\
$\mathbf{p_{16,k}}$  & $\left(1 + \sqrt{2}\right)^2 \left(\sqrt[4]{2} - 1\right)^4$ & $\frac{4}{\left(1 + \sqrt[4]{2} \right)^2 G} \operatorname{sech} (4\pi k)\quad$\\
$\mathbf{p_{25,k}}$  & $\frac{\left(3 - 2 \sqrt[4]{5}\right)\left(\sqrt{5} - 2\right)}{\sqrt{2}}$ & $\frac{5}{\left(2\sqrt{2} + \sqrt{10} \right) G} \operatorname{sech} (5\pi k) \quad$
\end{tabular}
    \caption{Note, that the probability mass functions for the square numbers in \textbf{bold} have quiet simple normalization constants in terms of Gau\ss' constant $G=\varpi/\pi$.}
    \label{tab:sqrt_r_sech}
\end{table}

We can also determine the probability mass functions for the DSH distribution with scaling factor $1/\sqrt{r}$. 
The DSH distribution makes an easy proof of the following functional equation possible, which was shown in \cite{kirschenhofer_1996} by using the Mellin transform. For $x > 0$, the functional equation
\begin{equation}\label{eq:functional_F}
F(x) := \sum_{k=1}^\infty \frac{e^{-kx}}{1 + e^{-2kx}} =  \frac{\pi}{4x} - \frac{1}{4} + \frac{\pi}{x}F\left(\frac{\pi^2}{x}\right)
\end{equation}
holds. In terms of the hyperbolic secant, we can also write 
$$
F(x) = \frac{1}{4} \sum_{k\in\mathbb{Z}} \operatorname{sech}(xk) - \frac{1}{4}
$$
and get the following scaling theorem:

\begin{theorem}
Let $x \in \mathbb{R}$. Then one has the functional equation
\begin{equation}
\sum_{k \in \mathbb{Z}} \operatorname{sech}\left(xk\right) = \frac{\pi}{x} \sum_{k \in \mathbb{Z}} \operatorname{sech}\left(\frac{\pi^2}{x} k \right)
\end{equation}
which is equivalent to (\ref{eq:functional_F}). For every $r \in \mathbb{N}$, this yields the probability mass function
$$
\left(q_{r,k} \right)_{k \in \mathbb{Z}} = \left(\frac{\pi}{\sqrt{r} \cdot 2K(k_r)} \operatorname{sech}\left( \frac{\pi}{\sqrt{r}} k \right) \right)_{k \in \mathbb{Z}}.
$$
for the discrete bilateral hyperbolic secant distribution with scaling factor $1/\sqrt{r}$.
\end{theorem}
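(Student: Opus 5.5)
Note first that the statement only makes sense for $x>0$, or for $x\neq 0$ after using evenness of $\operatorname{sech}$, since for $x=0$ both sides diverge. I will prove it for $x>0$ and extend to $x<0$ by symmetry.

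The plan is Poisson summation, using that $\operatorname{sech}$ is, up to scaling, its own Fourier transform. With $\hat f(\xi)=\int_{\mathbb{R}} f(t)e^{-2\pi i t\xi}\,\mathrm{d}t$, the classical integral $\int_{\mathbb{R}} \operatorname{sech}(\pi t)e^{-2\pi i t\xi}\,\mathrm{d}t=\operatorname{sech}(\pi\xi)$ holds. I will verify it quickly by a rectangular contour of height $i$, where $\operatorname{sech}(\pi(t+i))=-\operatorname{sech}(\pi t)$ and the only enclosed pole is at $t=i/2$. Rescaling gives, for $f(t)=\operatorname{sech}(xt)$,
$$
\hat f(\xi)=\frac{\pi}{x}\operatorname{sech}\!\left(\frac{\pi^2}{x}\xi\right).
$$
Both $f$ and $\hat f$ are Schwartz-type: they decay exponentially together with all derivatives. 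Hence the Poisson summation formula $\sum_k f(k)=\sum_k \hat f(k)$ applies with absolute convergence on both sides, and this is exactly the claimed identity.

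For the equivalence with (\ref{eq:functional_F}), I will substitute $\sum_{k\in\mathbb{Z}}\operatorname{sech}(xk)=4F(x)+1$, which follows from $\operatorname{sech}(kx)=2e^{-kx}/(1+e^{-2kx})$, and likewise $\sum_{k\in\mathbb{Z}}\operatorname{sech}(\pi^2 k/x)=4F(\pi^2/x)+1$. The identity becomes $4F(x)+1=\frac{\pi}{x}\bigl(4F(\pi^2/x)+1\bigr)$. Dividing by $4$ gives precisely (\ref{eq:functional_F}), and every step reverses.

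For the probability mass function, take $x=\pi/\sqrt r$, so that $\pi^2/x=\sqrt r\,\pi$. The identity gives
$$
\sum_{k}\operatorname{sech}\!\left(\frac{\pi}{\sqrt r}k\right)=\sqrt r\sum_k\operatorname{sech}(\sqrt r\pi k).
$$
By Ramanujan's formula (\ref{form:ramanujan_1}) together with (\ref{form:elliptic_modulus}), the right-hand sum equals $2K(k_r)/\pi$, so the left-hand sum equals $2\sqrt r K(k_r)/\pi$. Since the terms are positive, normalizing yields exactly $q_{r,k}$.

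The main obstacle is the self-duality integral, specifically getting the pole residue and the sign from the shift by $i$ right. Everything else is bookkeeping. Justifying Poisson summation is routine because of the exponential decay.
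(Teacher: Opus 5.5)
Your proposal is correct and follows the same route as the paper. The Fourier self-duality of $\operatorname{sech}$ plus Poisson summation gives the functional equation, and Ramanujan's formula (\ref{form:ramanujan_1}) with $K'(k_r)/K(k_r)=\sqrt r$ gives the normalization. Choosing $x=\pi/\sqrt r$ instead of the paper's $x=\sqrt r\,\pi$ makes no difference. Your contour check of the transform and your verification of the equivalence with (\ref{eq:functional_F}) fill in steps the paper only asserts.

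One small correction: evenness of $\operatorname{sech}$ does not extend the identity as displayed to $x<0$. There the left side is positive while $\frac{\pi}{x}\sum_k\operatorname{sech}(\pi^2k/x)$ is negative, so symmetry only gives the identity with $\pi/|x|$, which is what the paper's proof actually derives. You should therefore state the result for $x>0$, or with $\pi/|x|$ in place of $\pi/x$.
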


\begin{proof}
The $-2i \pi$-Fourier transform $\mathcal{F}$ of the hyperbolic secant is well-known, viz.
$$
\mathcal{F}[u \mapsto \operatorname{sech}(u)](t) = \pi \operatorname{sech} \left(\frac{\pi}{2}t\right)
$$
Hence, by the Poisson summation formula, for every $x \in \mathbb{R}$ we get
\begin{align*}
\sum_{k \in \mathbb{Z}} \operatorname{sech}(xk) &=\ \sum_{k \in \mathbb{Z}} \mathcal{F}\left[y \mapsto \operatorname{sech}(xy)\right](\eta)\big|_{\eta = k}\\
&=\ \sum_{k\in \mathbb{Z}} \int_\mathbb{R} \operatorname{sech}(u) e^{-i(2\pi/x) \cdot \eta} \frac{\mathrm{d}u}{x}\big|_{\eta = k}\\
&=\ \frac{\pi}{|x|} \sum_{k \in \mathbb{Z}} \operatorname{sech}\left(\frac{\pi^2}{x}\eta\right) \big|_{\eta = k},
\end{align*}
which results in Equation (\ref{eq:functional_F}). Now, we substitute $x = \sqrt{r} \cdot \pi$. For every $r \in \mathbb{N}$, Ramanujan's formula \ref{form:ramanujan_1} yields
$$
\sum_{k\in \mathbb{Z}} \operatorname{sech}\left(\frac{\pi}{\sqrt{r}} k \right) = \sqrt{r} \left(\sum_{k \in \mathbb{Z}} \operatorname{sech}\left(\sqrt{r} \cdot \pi k \right)\right) = \frac{2\sqrt{r}}{\pi}K\left(k_r\right),
$$
as claimed.
\end{proof}

For the scaling factor $1/\sqrt{r}$, some relevant normalization constants can easily be calculated by multiplying the corresponding normalizing constants for the scaling factor $\sqrt{r}$ in Table \ref{tab:sqrt_r_sech} with the number $1/\sqrt{r}$. This yields some singular values and matching probability mass functions $\left(q_{r,k}\right)_{k \in \mathbb{Z}}$ as follows:

\renewcommand{\arraystretch}{2}
\begin{table}[t]
    \centering
    \begin{tabular}{c|c|r}
$q_{r,k}$  &   $ \lambda^*(r) = k_r$ & $\frac{\pi}{\sqrt{r} \cdot 2K(k_r)}\operatorname{sech} (\sqrt{r}\pi k)$\\
\hline
$\mathbf{q_{1,k}}$  & $\frac{1}{\sqrt{2}}$ & $\frac{1}{\sqrt{2}G} \operatorname{sech} \left(\pi k\right)\quad  $\\
$q_{2,k}$  & $\sqrt{2}-1$ & $\frac{4\sqrt{\pi^3}}{\sqrt{\sqrt{2}+1} \cdot \Gamma(1/8)\Gamma(3/8)} \operatorname{sech} \left(\frac{\pi}{\sqrt{2}} k\right)$\\
$q_{3,k}$  & $\frac{\sqrt{2}\left(\sqrt{3}-1\right)}{4}$ & $\frac{\sqrt[4]{2^3} \cdot \pi^2 }{\sqrt[4]{3^3} \cdot \Gamma^3(1/3)}\operatorname{sech} \left(\frac{\pi}{\sqrt{3}} k\right)$\\
$\mathbf{q_{4,k}}$  & $3-2\sqrt{2}$ & $\frac{1}{(1 + \sqrt{2})G}\operatorname{sech} \left(\frac{\pi}{2} k\right)\quad$\\
$q_{5,k}$  & $\frac{1}{\sqrt{2}} - \sqrt{\sqrt{5}-2}$ & $\frac{\sqrt{8 \pi^3}}{\sqrt[4]{2+\sqrt{5}} \sqrt{\Gamma(1/20)\Gamma(3/20)\Gamma(7/20)\Gamma(9/20)}} \operatorname{sech} \left(\frac{\pi}{\sqrt{5}} k\right)$\\
$q_{6,k}$  & $\left(2-\sqrt{3}\right)\left(\sqrt{3}-\sqrt{2}\right)$ & $\frac{\sqrt{16 \pi^3}}{\sqrt{(1 +\sqrt{2}+\sqrt{6}) \cdot \Gamma(1/24)\Gamma(5/24)\Gamma(7/24)\Gamma(11/24)}} \operatorname{sech} \left(\frac{\pi}{\sqrt{6}} k\right)$\\
$q_{7,k}$  & $\frac{3-\sqrt{7}}{8}$ & $\frac{2\pi^2}{\sqrt[4]{7} \cdot \Gamma(1/7)\Gamma(2/7)\Gamma(4/7)} \operatorname{sech} \left(\frac{\pi}{\sqrt{7}} k\right)$\\
$\mathbf{q_{9,k}}$  & $\frac{\left(\sqrt{2}-\sqrt[4]{3}\right)\left(\sqrt{3}-1\right)}{2}$ & $\frac{1}{\left(\sqrt[4]{3^3} + \sqrt[4]{3} \right) G} \operatorname{sech} \left(\frac{\pi}{3} k\right)\quad$\\
$\mathbf{q_{16,k}}$  & $\left(1 + \sqrt{2}\right)^2 \left(\sqrt[4]{2} - 1\right)^4$ & $\frac{1}{\left(1 + \sqrt[4]{2} \right)^2 G} \operatorname{sech} \left(\frac{\pi}{4} k\right)\quad$\\
$\mathbf{q_{25,k}}$  & $\frac{\left(3 - 2 \sqrt[4]{5}\right)\left(\sqrt{5} - 2\right)}{\sqrt{2}}$ & $\frac{1}{\left(2\sqrt{2} + \sqrt{10} \right) G} \operatorname{sech} \left(\frac{\pi}{5} k\right) \quad$
\end{tabular}
    \caption{Note, that the probability mass functions highlighted in \textbf{bold} have simple normalization constants with a $1$ in the nominator and scalar times  Gau\ss' constant $G=\varpi/\pi$.}
    \label{tab:sqrt_r_sech}
\end{table}

\vspace{3cm}

\section{The First Discrete Bilateral Hyperbolic Secant Distribution}

We calculate some moments for the first discrete bilateral hyperbolic secant distribution.

\begin{theorem} \label{thm_Var_1_b}
Let $X\sim\mathsf{Sech}_{1,b}$ be a discrete random variable which is distributed according to the first discrete bilateral hyperbolic secant distribution. The odd moments vanish and we have
\begin{align*}    
(i) \quad \mathbb{E}\left[X^2 \right] =&\ \mathrm{Var}(X) = \left(\frac{G}{2}\right)^2 \approx 0.1741505,\\
(ii) \quad \mathbb{E}\left[X^4 \right] =&\ 9\left(\frac{G}{2}\right)^4 \approx 0.2729555,\\
(iii) \quad \mathbb{E}\left[X^6 \right] =&\ 153\left(\frac{G}{2}\right)^6 \approx 0.8081008,\\
(iv) \quad \mathbb{E}\left[X^8 \right] =&\ 4977\left(\frac{G}{2}\right)^8 \approx 4.5779016,
\end{align*}

with Gau\ss' constant $G = \varpi/\pi$ and the lemniscate constant $\varpi = \Gamma^2(1/4)/(2\sqrt{2\pi})$.
\end{theorem}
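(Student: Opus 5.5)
The plan is to avoid evaluating $\sum_k k^{2m}\operatorname{sech}(\pi k)$ term by term. Instead, I would read these sums off as Maclaurin coefficients of a Jacobi elliptic function at the lemniscatic modulus $k_1=1/\sqrt{2}$ from (\ref{form:elliptic_modulus}). The odd moments vanish at once, since $k\mapsto k^{2m+1}\operatorname{sech}(\pi k)$ is odd and the series converges absolutely. For the even moments, introduce the generating function
$$
\Phi(a)=\sum_{n\in\mathbb{Z}}\operatorname{sech}(\pi n)\cos(2\pi n a)=\sum_{m\ge 0}\frac{(-1)^m(2\pi a)^{2m}}{(2m)!}\sum_{n\in\mathbb{Z}} n^{2m}\operatorname{sech}(\pi n).
$$
Interchanging the sums is justified by the exponential decay of $\operatorname{sech}$. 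Consequently $\mathbb{E}[X^{2m}]$ is, up to the factor $(-1)^m(2\pi)^{2m}/(2m)!$ and the normalization $C_{1,b}=1/(\sqrt{2}G)$ from Theorem \ref{thm:pmfs}, the $a^{2m}$-coefficient of $\Phi$.

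The key step is to identify $\Phi$ in closed form. There are two equivalent routes. The first is Poisson summation with the self-duality $\mathcal{F}[\operatorname{sech}(\pi\,\cdot)]=\operatorname{sech}(\pi\,\cdot)$ used in the proof of the scaling theorem, which gives $\Phi(a)=\sum_{k}\operatorname{sech}(\pi(k+a))$. The second, more directly, is the classical Fourier expansion (cf.\ \cite{whittaker_1920})
$$
\operatorname{dn}(u,k)=\frac{\pi}{2K}+\frac{2\pi}{K}\sum_{n\ge1}\frac{q^n}{1+q^{2n}}\cos\Big(\frac{n\pi u}{K}\Big),\qquad q=e^{-\pi K'/K}.
$$
At $k=k_1$ we have $K'=K$ and $q=e^{-\pi}$, so $2q^n/(1+q^{2n})=\operatorname{sech}(\pi n)$. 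Setting $u=2Ka$ then yields
$$
\Phi(a)=\frac{2K}{\pi}\operatorname{dn}(2Ka,k_1)=\sqrt{2}\,G\,\operatorname{dn}(2Ka,k_1).
$$
The constant term reproduces Ramanujan's normalization (\ref{form:ramanujan_1}), which is a useful consistency check.

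Writing $\operatorname{dn}(u)=\sum_m c_{2m}u^{2m}$ and comparing coefficients, and using $2K=\sqrt{2}\varpi$ so that $2K/(2\pi)=G/\sqrt{2}$, I obtain
$$
\mathbb{E}[X^{2m}]=(-1)^m(2m)!\,c_{2m}\Big(\frac{G}{\sqrt{2}}\Big)^{2m}.
$$
It remains to insert the Maclaurin coefficients of $\operatorname{dn}$, namely
$$
\operatorname{dn}(u)=1-k^2\frac{u^2}{2!}+k^2(4+k^2)\frac{u^4}{4!}-k^2(16+44k^2+k^4)\frac{u^6}{6!}+k^2(64+912k^2+408k^4+k^6)\frac{u^8}{8!}-\dots,
$$
evaluated at $k^2=1/2$. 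These can be derived from $\operatorname{dn}'=-k^2\operatorname{sn}\operatorname{cn}$, or equivalently from the ODE $(\operatorname{dn}')^2=(1-\operatorname{dn}^2)(\operatorname{dn}^2-k'^2)$, by recursive series substitution.

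A quick check confirms the formula. For $m=1$ it gives $2\cdot\tfrac14\cdot G^2/2=(G/2)^2$, and for $m=2$ it gives $\tfrac12\cdot\tfrac92\cdot G^4/4=9(G/2)^4$. The remaining two cases are analogous arithmetic, producing $153$ and $4977$ after converting $(G/\sqrt2)^{2m}=2^m(G/2)^{2m}$. Since the mean is zero, $\mathrm{Var}(X)=\mathbb{E}[X^2]$. The main obstacle is bookkeeping rather than conceptual: pinning down the Fourier series of $\operatorname{dn}$ with the correct constants, and computing the $u^6$ and $u^8$ coefficients of $\operatorname{dn}$ without error. For these I would first use the ODE recursion and then cross-check numerically against the stated decimals.
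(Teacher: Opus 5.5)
Your proposal is correct, but it takes a different route from the paper. The paper does not derive the moment sums. It quotes Ramanujan's Entries 17(ii)--(v) of Chapter 17 (proved by Berndt) for general modulus and substitutes $x=k_1^2=1/2$, $z=2K/\pi=\sqrt{2}G$.

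You instead use the Fourier series of $\operatorname{dn}$ at $q=e^{-\pi}$ to identify the characteristic function, $\mathbb{E}[e^{itX}]=\operatorname{dn}(Gt/\sqrt{2},1/\sqrt{2})$. You then read off $\mathbb{E}[X^{2m}]=(-1)^m(2m)!\,c_{2m}(G/\sqrt{2})^{2m}$. Run with general $z=2K/\pi$ and $x=k^2$, the same coefficient comparison reproduces the cited entries themselves. So you are proving the input the paper imports rather than using it.

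The bookkeeping you flagged as the main risk is in order. The recursion coming from $\operatorname{dn}''=(2-k^2)\operatorname{dn}-2\operatorname{dn}^3$ reproduces exactly your coefficients $-k^2$, $k^2(4+k^2)$, $-k^2(16+44k^2+k^4)$ and $k^2(64+912k^2+408k^4+k^6)$. At $k^2=1/2$ they give $(2m)!\,|c_{2m}|=\tfrac12,\tfrac94,\tfrac{153}{8},\tfrac{4977}{16}$, hence $1,9,153,4977$ after the factor $2^m$.

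The two routes trade off as follows. The paper's is shorter but opaque, and it depends on transcribing Ramanujan's formulas correctly; its eighth-moment display in fact writes $57(x/4)$ where $57(x/4)^2$ is meant. Yours is self-contained apart from the classical Fourier series of $\operatorname{dn}$ and $K(k_1)=\varpi/\sqrt{2}$. It also yields every even moment from one generating function, at the cost of computing Maclaurin coefficients of $\operatorname{dn}$ by hand.

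Two minor points:
\begin{itemize}
\item The termwise expansion of $\Phi$ is justified only for $|a|<1/2$, where $\sum_n\cosh(2\pi a n)\operatorname{sech}(\pi n)$ converges. This suffices for comparing coefficients.
\item The Poisson-summation ``first route'' only rewrites $\Phi$ as the periodization $\sum_k\operatorname{sech}(\pi(k+a))$ and does not identify it with $\operatorname{dn}$. It is therefore not an independent proof and can be dropped.
\end{itemize}
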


\begin{proof}
Let $C_{1,b} = 1/(\sqrt{2}G)$. We get $\mathbb{E}[X^n] = 0$ for odd $n$ due to symmetry. The other formulas directly follow by Ramanujan's \textit{Entries 17 (ii)-(v)} in Chapter 17 of his second notebook, proven by Berndt on pp. 138-139 in \cite{ramanujan_1991}: For $j\in \{2,4,6,8\}$, we write
$$
\mathbb{E}\left[X^{j}\right] = C_{1,b} \sum_{k\in\mathbb{Z}} k^{j} \operatorname{sech}(\pi k) = \frac{\sqrt{2}}{G} \sum_{n=1}^\infty n^{j} \operatorname{sech}(\pi n).
$$
We translate Ramanujan's expressions into the common notation via $x = k^2$, $y = \pi K/K'$, and $z=2 K/\pi$ and use the elliptic lambda function $\lambda^\star(k_1) = 1/\sqrt{2}$ with $K/K'=1$ and $K(k_1)=\varpi/\sqrt{2}$. This yields $x=1/2$, $y=\pi$ and $z=\sqrt{2}G$. We have

$$
\sum_{n=1}^\infty n^{2} \operatorname{sech}(y n) = \frac{1}{8} z^3 x
\ \Rightarrow \
\mathbb{E}\left[X^2\right] = \frac{1}{4} G^2,
$$
$$
\sum_{n=1}^\infty n^{4} \operatorname{sech}(y n) = \frac{1}{2} z^5 \left(\frac{x}{4} + \left(\frac{x}{4} \right)^2 \right)
\ \Rightarrow \
\mathbb{E}\left[X^4\right] = \frac{9}{16} G^4,
$$
$$
\sum_{n=1}^\infty n^{6} \operatorname{sech}(y n) = \frac{1}{2} z^7 \left(\frac{x}{4} + 11\left(\frac{x}{4}\right)^2 + \left(\frac{x}{4} \right)^3 \right)
\ \Rightarrow \
\mathbb{E}\left[X^6\right] = \frac{153}{64} G^6,
$$
$$
\sum_{n=1}^\infty n^{8} \operatorname{sech}(y n) = \frac{1}{2} z^9 \left(\frac{x}{4} + 57\left(\frac{x}{4}\right) + 102 \left(\frac{x}{4} \right)^3 + \left(\frac{x}{4} \right)^4 \right)
\ \Rightarrow \
\mathbb{E}\left[X^8\right] = \frac{4977}{256} G^8,
$$
\vspace{0.7cm}
as claimed.
\end{proof}

Now, we can deduce some exact expressions for series of odd powers of the hyperbolic secant via the Poisson summation.

\begin{theorem}
The following identities hold:

\begin{align*}
(i) \quad \sum_{k\in\mathbb{Z}} \mathrm{sech}^3 (\pi k) =&\ \frac{\sqrt{2}}{2}\left(G^3 + G\right) \approx 1.0012841,\\
(ii) \quad \sum_{k \in \mathbb{Z}} \mathrm{sech}^5(\pi k) =&\ \frac{\sqrt{2}}{24}(9 G^5 + 10 G^3 + 9 G) \approx 1.0000096,\\
(iii) \quad \sum_{k \in \mathbb{Z}} \mathrm{sech}^7(\pi k) =&\ \frac{\sqrt{2}}{720}\left(153 G^7 + 315 G^5 + 259 G^3 + 225 G\right) \approx 1.0000000711,\\
(iv) \quad \sum_{k \in \mathbb{Z}} \mathrm{sech}^9(\pi k) =&\ \frac{\sqrt{2}}{42320}\left(4977 G^9 + 12852 G^7 + 17766 G^5 + 12916 G^3 + 11025 G\right)\\
\approx&\ 1.0000000005,
\end{align*}
with Gau\ss' constant $G = \varpi/\pi$ and the lemniscate constant $\varpi = \Gamma^2(1/4)/(2\sqrt{2\pi})$.
\end{theorem}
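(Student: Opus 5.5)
The plan is to apply Poisson summation directly to $f_n(y)=\operatorname{sech}^n(\pi y)$ for odd $n=2m+1$. The Fourier transform of each such $f_n$ turns out to be an explicit even polynomial in the frequency times $\operatorname{sech}(\pi\xi)$. The resulting sum $\sum_k \hat f_n(k)$ is then a linear combination of the moments $\sum_k k^{2j}\operatorname{sech}(\pi k)$. Those moments are given by Theorem \ref{thm_Var_1_b} together with the normalization $\sum_k\operatorname{sech}(\pi k)=\sqrt2 G$ from Theorem \ref{thm:pmfs}: since $C_{1,b}=1/(\sqrt2 G)$, we have $\sum_k k^{2j}\operatorname{sech}(\pi k)=\sqrt2 G\,\mathbb{E}[X^{2j}]$.

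\emph{Step 1 (Fourier transforms of odd powers).} Write $\phi_n(t)=\int_{\mathbb R}\operatorname{sech}^n(u)e^{-iut}\,\mathrm du$, so that $\phi_1(t)=\pi\operatorname{sech}(\pi t/2)$ as used in the previous proof. Differentiating twice gives
$$(\operatorname{sech}^n)''=n^2\operatorname{sech}^n-n(n+1)\operatorname{sech}^{n+2}.$$
Taking transforms, and using that all boundary terms vanish by exponential decay, yields the recursion
$$\phi_{n+2}(t)=\frac{t^2+n^2}{n(n+1)}\,\phi_n(t).$$
By induction,
$$\phi_{2m+1}(t)=\frac{\pi}{(2m)!}\prod_{j=1}^{m}\bigl(t^2+(2j-1)^2\bigr)\operatorname{sech}\Bigl(\frac{\pi t}{2}\Bigr).$$
Rescaling to the $-2\pi i$ convention, $\hat f_n(\xi)=\frac1\pi\phi_n(2\xi)$, so
$$\hat f_{2m+1}(\xi)=\frac{1}{(2m)!}\prod_{j=1}^{m}\bigl(4\xi^2+(2j-1)^2\bigr)\operatorname{sech}(\pi\xi).$$
Poisson summation is legitimate here: both $f_n$ and $\hat f_n$ decay exponentially, so the sums converge absolutely.

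\emph{Step 2 (expand and insert moments).} For $m=1$ we get $\hat f_3(k)=\tfrac12(1+4k^2)\operatorname{sech}(\pi k)$. Summing gives $\tfrac12\sqrt2 G\,(1+4\cdot G^2/4)$, which is (i). For $m=2$ the polynomial is $(1+4k^2)(9+4k^2)=9+40k^2+16k^4$. Dividing by $4!=24$ and inserting $\mathbb E[X^2]=G^2/4$ and $\mathbb E[X^4]=9G^4/16$ gives $\tfrac{\sqrt2}{24}G(9+10G^2+9G^4)$, which is (ii). For $m=3,4$ I expand $\prod_{j\le m}(4k^2+(2j-1)^2)$ and divide by $6!=720$ and $8!=40320$ respectively. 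I then substitute $\mathbb E[X^6]=153(G/2)^6$ and $\mathbb E[X^8]=4977(G/2)^8$. The powers of $4$ coming from $4k^2$ cancel exactly against the powers of $2$ in $(G/2)^{2j}$, which is why the coefficients $153$ and $4977$ appear unchanged.

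\emph{Main obstacle.} The only delicate part is bookkeeping. First, the recursion and the scaling between the $e^{-iut}$ and $e^{-2\pi i\xi y}$ conventions must be handled carefully. Second, the coefficients in the expansions for $m=3,4$ must be computed correctly. For example, $m=3$ gives $225+1036k^2+1008k^4+64k^6$, which produces $225G+259G^3+315G^5+153G^7$. For (iv) the natural denominator is $8!=40320$, and I will check the stated constant against this value. I will also verify the numerical approximations as a sanity check.
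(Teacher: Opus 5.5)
Your proof is correct, but it runs Poisson summation in the opposite direction from the paper. The paper transforms the moment integrand $x^{2m}\operatorname{sech}(\pi x)$. Its transform $(-2\pi i)^{-2m}\partial_\xi^{2m}\operatorname{sech}(\pi\xi)$ is a combination of $\operatorname{sech}^{2j+1}(\pi\xi)$ for $j\le m$. So each moment becomes a linear combination of odd-power sums, and the paper solves this triangular system one step at a time: (i) feeds (ii), (i)--(ii) feed (iii), and so on.

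You transform $\operatorname{sech}^{2m+1}(\pi y)$ itself. Your recursion $\phi_{n+2}=\frac{t^2+n^2}{n(n+1)}\phi_n$ then gives, for every $m$,
\[
\sum_{k\in\mathbb{Z}}\operatorname{sech}^{2m+1}(\pi k)=\frac{1}{(2m)!}\sum_{k\in\mathbb{Z}}\prod_{j=1}^{m}\bigl(4k^2+(2j-1)^2\bigr)\operatorname{sech}(\pi k).
\]
This is the inverse of the paper's triangular system, written in closed form. Each identity then follows independently from the moments of Theorem \ref{thm_Var_1_b}, with no back-substitution, and the denominators $2,24,720,\dots$ are explained as $(2m)!$. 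The paper's route needs only the self-duality of $\operatorname{sech}(\pi x)$ and the rule $\mathcal{F}[x^nf]=(-2\pi i)^{-n}\partial^n\mathcal{F}[f]$. The price is expanding $\partial^{2m}\operatorname{sech}$ and back-substituting. Your route needs the transforms of the higher powers, which the second-order recursion supplies cheaply.

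Two bookkeeping points.

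\emph{The case $m=3$.} The product is $225+1036k^2+560k^4+64k^6$: the $k^4$ coefficient is $16(1+9+25)=560$, not $1008$. Your final coefficient $315=560\cdot\tfrac{9}{16}$ is the right one, so this is only a slip in the displayed polynomial.

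\emph{The case $m=4$.} The product is $11025+51664k^2+31584k^4+5376k^6+256k^8$, which yields
\[
\sum_{k\in\mathbb{Z}}\operatorname{sech}^9(\pi k)=\frac{\sqrt2}{40320}\bigl(4977G^9+12852G^7+17766G^5+12916G^3+11025G\bigr).
\]
So your suspicion is justified: the denominator $42320$ in (iv) is a misprint for $8!=40320$. With $42320$ the right-hand side would be about $0.953$. That is impossible, since the sum equals $1+2\sum_{k\ge1}\operatorname{sech}^9(\pi k)>1$. With $40320$ one recovers the stated approximation $1.0000000005$. Carrying out the paper's own elimination for $n=8$ also gives $40320$.

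Your argument therefore proves (i)--(iii) as stated and (iv) with the corrected constant.
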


\begin{proof}
Let $C_{1,b} = 1/(\sqrt{2}G)$. For $n \in \{2, 4, 6, 8\}$, we apply Poisson's summation formula to the identities $C_{1,b} \sum_{k\in\mathbb{Z}} k^{n} \mathrm{sech}(\pi k)$, i.\,e. the $n$-th moments of a discrete random variable which is distributed according to the first bilateral hyperbolic secant distribution. Define the function $s(x) = x^{n} \cdot \mathrm{sech}(\pi x)$ and the Fourier transform
\begin{align*}
\mathcal{F}[s](\xi) =&\ \int_{-\infty}^\infty s(x) e^{-2\pi i x \cdot \xi} \mathrm{d}x.
\end{align*}
It is easy to see that for every $n\in\mathbb{N}$, we get
$$
\mathcal{F}[x^n f(x)](\xi) = \frac{1}{(-2\pi i)^n} \frac{\partial^n}{\partial\xi^n} [\mathcal{F}[f(x)](\xi)]
$$
for any Schwartz function $f \in \mathcal{S}(\mathbb{R})$ and it is well-known that
$$
\mathcal{F}[\mathrm{sech}(\pi x)](\xi) = \mathrm{sech(\pi \xi)}
$$
holds, i.\,e. this version of the hyperbolic secant is a fix-point of this particular form of the Fourier transform. Now, Poisson's summation formula states
$$\sum_{k\in \mathbb{Z}}s(k) = \sum_{\kappa \in \mathbb{Z}}\mathcal{F}[s](\kappa).$$
\textit{(i)} For $n=2$, we have 
\begin{align*}
\mathcal{F}[s](\xi) = -\frac{1}{4 \pi ^2} \frac{\partial^2}{\partial\xi^2} [\mathrm{sech}(\pi \xi)]
= \frac{1}{4}(2 \cdot \mathrm{sech}^3(\pi \xi) - \mathrm{sech}(\pi \xi)). 
\end{align*}
This results in
$$
\mathbb{E}\left[X^2\right] = \frac{C_{1,b}}{2} \sum_{\kappa \in \mathbb{Z}} \mathrm{sech}^3(\pi \kappa) - \frac{C_{1,b}}{4} \sum_{\kappa \in \mathbb{Z}} \mathrm{sech}(\pi \kappa).
$$
But the last summand can be explicitly calculated thanks to one of Ramanujan's identities, cf. p. 138 of \cite{ramanujan_1991}. It was already used in the proof of Theorem \ref{thm:pmfs}. One has
$$
\sum_{\kappa \in \mathbb{Z}} \mathrm{sech}(\pi \kappa) = 2 \sum_{n=1}^\infty \mathrm{sech}(\pi n) + 1 = 2 \frac{\sqrt{2}G - 1}{2}  + 1 =\sqrt{2}G = \frac{1}{C_{1,b}}.
$$
This finally leads to
$$
\mathrm{Var}(X) = \mathbb{E}\left[X^2\right] = \frac{1}{2\sqrt{2}G} \sum_{\kappa \in \mathbb{Z}} \mathrm{sech}^3(\pi \kappa)  - \frac{1}{4}.
$$
But, result \textit{(i)} from Theorem \ref{thm_Var_1_b} implies 
$$
\mathrm{Var}(X) = \frac{1}{4}G^2,
$$
which holds if and only if
$$
\sum_{k \in \mathbb{Z}} \mathrm{sech}^3(\pi k) = \frac{G^3 + G}{\sqrt{2}}.
$$
\textit{(ii)} For $n=4$, we have 
\begin{align*}
\mathcal{F}[s](\xi) =&\ \frac{1}{16 \pi ^4} \frac{\partial^4}{\partial\xi^4} \left[ \mathrm{sech}(\pi \xi)\right] = \frac{3}{2} \operatorname{sech}^5(\pi \xi) - \frac{5}{4}\operatorname{sech}^3 (\pi \xi) + \frac{1}{16} \operatorname{sech}(\pi \xi).
\end{align*}
The Poisson summation results in
$$
\mathbb{E}\left[X^4\right] = \frac{3}{2} C_{1,b} \sum_{\kappa \in \mathbb{Z}} \mathrm{sech}^5(\pi \kappa) - \frac{5}{4} C_{1,b} \sum_{\kappa \in \mathbb{Z}} \mathrm{sech}^3(\pi \kappa) + \frac{1}{16} C_{1,b} \sum_{\kappa \in \mathbb{Z}} \mathrm{sech}(\pi \kappa).
$$
But the last two series can be explicitly calculated by part \textit{(i)}, viz.
$$
\sum_{\kappa \in \mathbb{Z}} \mathrm{sech}^3(\pi \kappa) = \frac{G^3 + G}{\sqrt{2}}, \quad \sum_{\kappa \in \mathbb{Z}} \mathrm{sech}(\pi \kappa) = \frac{1}{C_{1,b}} = \sqrt{2}G.
$$
This yields
$$
\mathbb{E}\left[X^4\right] = \frac{3}{2\sqrt{2}G} \sum_{\kappa \in \mathbb{Z}} \mathrm{sech}^5(\pi \kappa) - \frac{5}{4\sqrt{2}G} \frac{G^3 + G}{\sqrt{2}} + \frac{1}{16}.
$$
But, result \textit{(ii)} from Theorem \ref{thm_Var_1_b} tells us
$$
\mathbb{E}\left[X^4\right] = \frac{9}{16}G^4,
$$
which holds if and only if
$$
\sum_{k \in \mathbb{Z}} \mathrm{sech}^5(\pi k) = \frac{\sqrt{2}}{24}(9 G^5 + 10G^3+ 9G).
$$

\textit{(iii)} For $n=6$, we have
\begin{align*}
\mathcal{F}[s](\xi) =&\ -\frac{1}{64 \pi ^6} \frac{\partial^6}{\partial\xi^6} \left[ \mathrm{sech}(\pi \xi)\right]\\
=&\ \frac{45}{4} \operatorname{sech}^7(\pi \xi) - \frac{105}{8}\operatorname{sech}^5 (\pi \xi) + \frac{91}{32} \operatorname{sech}^3(\pi \xi) - \frac{1}{64} \operatorname{sech}(\pi \xi).
\end{align*}
The Poisson summation results in
\begin{align*}
\mathbb{E}\left[X^6\right] =\ &\frac{45}{4} C_{1,b} \sum_{\kappa \in \mathbb{Z}} \operatorname{sech}^7(\pi \kappa) - \frac{105}{8} C_{1,b} \sum_{\kappa \in \mathbb{Z}} \operatorname{sech}^5 (\pi \kappa)\\
&+ \frac{91}{32} C_{1,b} \sum_{\kappa \in \mathbb{Z}} \operatorname{sech}^3(\pi \kappa) - \frac{1}{64} C_{1,b} \sum_{\kappa \in \mathbb{Z}} \operatorname{sech}(\pi \kappa).
\end{align*}
But the last four series can be explicitly calculated by the parts \textit{(i)} and \textit{(ii)}, viz.
\begin{align*}
& \sum_{k \in \mathbb{Z}} \mathrm{sech}^5(\pi k) = \frac{\sqrt{2}}{24}\left(9 G^5 + 10 G^3 + 9 G\right), \\ & \sum_{\kappa \in \mathbb{Z}} \mathrm{sech}^3(\pi \kappa) = \frac{\sqrt{2}}{2} \left(G^3+G\right), \\ & \sum_{\kappa \in \mathbb{Z}} \mathrm{sech}(\pi \kappa) = \sqrt{2}G.
\end{align*}
This yields
$$
\mathbb{E}\left[X^6\right] = \frac{45}{4\sqrt{2}G} \sum_{\kappa \in \mathbb{Z}} \mathrm{sech}^7(\pi \kappa) - \frac{105}{192} \left(9 G^4 + 10 G^2 + 9\right) + \frac{91}{64} \left(G^2 +1 \right) - \frac{1}{64}.
$$
But, result \textit{(ii)} from Theorem \ref{thm_Var_1_b} tells us
$$
\mathbb{E}\left[X^6\right] = \frac{153}{64}G^6,
$$
which holds if and only if
$$
\sum_{k \in \mathbb{Z}} \mathrm{sech}^7(\pi k) = \frac{\sqrt{2}}{720}\left(153 G^7 + 315 G^5 + 259 G^3 + 225 G\right).
$$

\textit{(iv)} For $n=8$, we have
\begin{align*}
\mathcal{F}[s](\xi) =&\ -\frac{1}{256 \pi^8} \frac{\partial^8}{\partial\xi^8} \left[ \mathrm{sech}(\pi \xi)\right]\\
=&\ \frac{315}{2} \operatorname{sech}^9(\pi \xi) - \frac{945}{4} \operatorname{sech}^7(\pi \xi) + \frac{1449}{16}\operatorname{sech}^5 (\pi \xi) - \frac{205}{32} \operatorname{sech}^3(\pi \xi) + \frac{1}{256} \operatorname{sech}(\pi \xi).
\end{align*}
The Poisson summation formula results in
\begin{align*}
\mathbb{E}\left[X^8\right] = \sum_{\kappa \in \mathbb{Z}} \Bigg[ & \frac{315}{2\sqrt{2}G}\operatorname{sech}^9(\pi \kappa) - \frac{945}{4\sqrt{2}G} \operatorname{sech}^7 (\pi \kappa) + \frac{1449}{16\sqrt{2}G} \operatorname{sech}^5(\pi \kappa)\\ 
&- \frac{205}{32\sqrt{2}G} \operatorname{sech}^3(\pi \kappa) + \frac{1}{256 \sqrt{2}G} \operatorname{sech}(\pi \kappa) \Bigg].
\end{align*}
But the last four series can be explicitly calculated by our parts \textit{(i)}, \textit{(ii)} and \textit{(iii)}, viz.
\begin{align*}
& \sum_{k \in \mathbb{Z}} \mathrm{sech}^7(\pi k) = \frac{\sqrt{2}}{720}\left(153 G^7 + 315 G^5 + 259 G^3 + 225 G\right)\\
& \sum_{k \in \mathbb{Z}} \mathrm{sech}^5(\pi k) = \frac{\sqrt{2}}{24}\left(9 G^5 + 10 G^3 + 9 G\right), \\ & \sum_{\kappa \in \mathbb{Z}} \mathrm{sech}^3(\pi \kappa) = \frac{\sqrt{2}}{2} \left(G^3+G\right), \\ & \sum_{\kappa \in \mathbb{Z}} \mathrm{sech}(\pi \kappa) = \sqrt{2}G.
\end{align*}

But, result \textit{(ii)} from Theorem \ref{thm_Var_1_b} tells us
$$
\mathbb{E}\left[X^8\right] = \frac{4977}{256}G^8,
$$
which holds if and only if
$$
\sum_{k \in \mathbb{Z}} \mathrm{sech}^9(\pi k) = \frac{\sqrt{2}}{42320}\left(4977 G^9 + 12852 G^7 + 17766 G^5 + 12916 G^3 + 11025 G\right),
$$
as claimed.
\end{proof}

\vspace{1cm}

\section{The Second Discrete Bilateral Hyperbolic Secant Distribution}

Ramanujan's {Entries 16 (x) - (xiii) in Chapter 17 of his second notebook, proven by Berndt on pp. 137-138 in \cite{ramanujan_1991}, can be statistically interpreted as follows: If you randomly generate odd integers with even multiplicity by using the second bilateral hyperbolic secant distribution, you can calculate the expected outcome up to the eighth order:

\begin{theorem} 
Let $Y\sim\mathsf{Sech}_{2,b}$ be a discrete random variable which is distributed according to the second bilateral hyperbolic secant distribution. We have
\begin{align*}    
(i) \quad \mathbb{E}\left[(2Y+1)^2 \right] =&\ 2G^2 \approx 1.3932039,\\
(ii) \quad \mathbb{E}\left[(2Y+1)^4 \right] =&\ 12 G^4 \approx 5.8230516,\\
(iii) \quad \mathbb{E}\left[(2Y+1)^6 \right] =&\ 216 G^6 \approx 73.0142850,\\
(iv) \quad \mathbb{E}\left[(2Y+1)^8 \right] =&\ 7056 G^8 \approx 1661.4885492,
\end{align*}

with Gau\ss' constant $G = \varpi/\pi$ and the lemniscate constant $\varpi = \Gamma^2(1/4)/(2\sqrt{2\pi})$.
\end{theorem}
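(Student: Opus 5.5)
The plan mirrors the proof of Theorem \ref{thm_Var_1_b}: reduce each moment to one of Ramanujan's series over odd integers, then specialize to the lemniscatic modulus. By Theorem \ref{thm:pmfs}, $Y\sim\mathsf{Sech}_{2,b}$ has law $s_k=\frac{1}{G}\operatorname{sech}\bigl(\frac{\pi}{2}(2k+1)\bigr)$. For even $j$ we write
$$
\mathbb{E}\left[(2Y+1)^{j}\right]=\frac{1}{G}\sum_{k\in\mathbb{Z}}(2k+1)^{j}\operatorname{sech}\Bigl(\frac{\pi}{2}(2k+1)\Bigr)=\frac{2}{G}\sum_{n=0}^\infty(2n+1)^{j}\operatorname{sech}\Bigl(\frac{\pi}{2}(2n+1)\Bigr).
$$
The second equality uses the substitution $k\mapsto -k-1$, which maps the negative indices bijectively onto the nonnegative ones and sends $(2k+1)^j$ to $(-(2n+1))^j=(2n+1)^j$ for even $j$. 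Absolute convergence, guaranteed by the exponential decay of $\operatorname{sech}$, justifies the rearrangement.

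Next I would quote Ramanujan's \emph{Entries 16 (x)--(xiii)} in Chapter 17, as proven by Berndt on pp.~137--138 of \cite{ramanujan_1991}. They express
$$
\sum_{n\ge0}(2n+1)^{2m}\operatorname{sech}\Bigl(\frac{(2n+1)y}{2}\Bigr),\qquad m=1,2,3,4,
$$
as $z^{2m+1}\sqrt{x}$ times a polynomial in $x$, in the notation $x=k^2$, $y=\pi K'/K$, $z=2K/\pi$. Translating to the lemniscatic case via \eqref{form:elliptic_modulus} with $r=1$ gives $x=1/2$, $y=\pi$ and $z=2K(k_1)/\pi=\sqrt{2}\,G$.

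Substituting these values, each series becomes a rational multiple of $G^{2m+1}$, because every power of $\sqrt{2}$ cancels against $x=1/2$ and $\sqrt{x}=1/\sqrt{2}$. Multiplying by $2/G$ then yields a rational multiple of $G^{2m}$. The claimed constants $2,12,216,7056$ should come out of this.

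The main obstacle is bookkeeping: transcribing the exact normalizations of Entries 16 (x)--(xiii) correctly. That means the factor in front, whether the prefactor is $\sqrt{x}$ or $\sqrt{x(1-x)}$, and the polynomial coefficients. A slip there would produce, for instance, $\sqrt{2}G^2$ instead of $2G^2$.

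To guard against this, I would cross-check each case numerically against the stated decimals. The terms decay like $e^{-\pi(2n+1)/2}$, so a few terms suffice. As an independent check for (i), Poisson summation can be applied in the spirit of the previous section, using
$$
\sum_{k}f(2k+1)=\sum_{m}\tfrac12(-1)^m\hat f\bigl(\tfrac m2\bigr),\qquad \mathcal{F}\bigl[\operatorname{sech}(\tfrac{\pi x}{2})\bigr](\xi)=2\operatorname{sech}(2\pi\xi).
$$
This expresses the second moment through derivatives of $\operatorname{sech}$ at half-integers, whose values are known from Theorem \ref{thm:pmfs}.
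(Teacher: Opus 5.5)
Your route is the paper's: fold the bilateral sum onto $n\ge0$, quote Entries 16 (x)--(xiii), and specialize to $x=\tfrac12$, $y=\pi$, $z=\sqrt2\,G$. The part you left open, namely the exact form of those entries, is the whole content of the proof. It resolves in favour of the prefactor $\sqrt{x}$:
$$
\sum_{n\ge0}(2n+1)^{2m}\operatorname{sech}\Bigl(\frac{(2n+1)y}{2}\Bigr)=\frac12\,z^{2m+1}\sqrt{x}\,P_m(x),
$$
with $P_1=1$, $P_2=1+4x$, $P_3=1+44x+16x^2$, $P_4=1+408x+912x^2+64x^3$.

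You can verify this without Berndt. Differentiate $\operatorname{cn}(u,k)=\frac{\pi}{kK}\sum_{n\ge0}\operatorname{sech}\bigl(\frac{(2n+1)y}{2}\bigr)\cos\frac{(2n+1)\pi u}{2K}$, with $y=\pi K'/K$, $2m$ times at $u=0$. The $P_m(k^2)$ are, up to sign, the Maclaurin coefficients of $\operatorname{cn}$. At $k=1$ they reduce to $|E_{2m}|=1,5,61,1385$, as they must, since $\operatorname{cn}(u,1)=\operatorname{sech}u$.

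At $x=\tfrac12$ one has $\frac12 z^{2m+1}\sqrt{x}=2^{m-1}G^{2m+1}$ and $P_m(\tfrac12)=1,3,27,441$. So the series equal $G^3,\,6G^5,\,108G^7,\,3528G^9$, and multiplying by $2/G$ gives $2G^2,\,12G^4,\,216G^6,\,7056G^8$.

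Your Poisson cross-check for (i) does not work as described. Your two formulas give
$$
\sum_{k\in\mathbb{Z}}(2k+1)^2\operatorname{sech}\Bigl(\frac{\pi}{2}(2k+1)\Bigr)=\sum_{m\in\mathbb{Z}}(-1)^m\bigl(2\operatorname{sech}^3(\pi m)-\operatorname{sech}(\pi m)\bigr).
$$
The sum $\sum_m(-1)^m\operatorname{sech}(\pi m)=G$ is accessible. The alternating cubic sum, however, is supplied neither by Theorem~\ref{thm:pmfs} nor by Section 3, which only has the unsigned $\sum_m\operatorname{sech}^3(\pi m)$. As in Section 3, Poisson summation runs the other way here. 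Combined with (i), it yields the new identity $\sum_m(-1)^m\operatorname{sech}^3(\pi m)=G^3+G/2$; it is not an independent check of (i). The real safeguard is the explicit form of the entries above, with the truncated-series numerics as a sanity check.
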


\begin{proof}
Let $C_{2,b} = 1/G$. We get $\mathbb{E}[Y^j] = 0$ for odd $j \in \mathbb{N}$ due to symmetry. The other formulas directly follow by Ramanujan's \textit{Entries 16 (x)-(xiii)} in Chapter 17 of his second notebook, proven by Berndt on pp. 137-138 in \cite{ramanujan_1991}: For $j\in \{2,4,6,8\}$, we write
$$
\mathbb{E}\left[Y^{j}\right] = C_{2,b} \sum_{k\in\mathbb{Z}} (2k+1)^{j} \operatorname{sech}\left(\frac{\pi}{2} (2k+1)\right) = \frac{2}{G} \sum_{n=0}^\infty (2n+1)^{j} \operatorname{sech}\left(\frac{\pi}{2} (2n+1)\right).
$$
We translate Ramanujan's expressions into the common notation via $x = k^2$, $y = \pi K/K'$, and $z=2 K/\pi$ and use the elliptic lambda function $\lambda^\star(k_1) = 1/\sqrt{2}$ with $K/K'=1$ and $K(k_1)=\varpi/\sqrt{2}$. This yields $x=1/2$, $y=\pi$ and $z=\sqrt{2}G$. We have

\begin{align*}
\textit{(i)} \quad
\sum_{n=0}^\infty (2n+1)^{2} \operatorname{sech}\left(\frac{1}{2} (2n+1)y\right) &= \frac{1}{2} z^3 \sqrt{x}\\
& \Rightarrow \mathbb{E}\left[(2Y+1)^2 \right]= 2 G^2,\\
\textit{(ii)} \quad
\sum_{n=0}^\infty (2n+1)^{4} \operatorname{sech}\left(\frac{1}{2} (2n+1)y\right) &= \frac{1}{2} z^5 (1+4x) \sqrt{x}\\
& \Rightarrow \mathbb{E}\left[(2Y+1)^4 \right] = 12 G^4,\\
\textit{(iii)} \quad
\sum_{n=0}^\infty (2n+1)^{6} \operatorname{sech}\left(\frac{1}{2} (2n+1)y\right) &= \frac{1}{2} z^7 (1+11(4x) + (4x)^2) \sqrt{x}\\
& \Rightarrow \mathbb{E}\left[(2Y+1)^6 \right] = 216 G^6,\\
\textit{(iv)} \quad
\sum_{n=0}^\infty (2n+1)^{8} \operatorname{sech}\left(\frac{1}{2} (2n+1)y\right) &= \frac{1}{2} z^9 (1 + 102(4x) + 57(4x)^2 + (4x)^3) \sqrt{x}\\
& \Rightarrow \mathbb{E}\left[(2Y+1)^8 \right] = 7056 G^8,
\end{align*}
as claimed.
\end{proof}

\section{Application to Contour Integrals}

We apply our result to complex analysis and calculate some contour integral on the critical line in terms of the variance of our first discrete bilateral hyperbolic secant distribution. We integrate the product of the analytic continuation of Dirichlet's beta function $\beta$ and the meromorphic continuations of Euler's Gamma function $\Gamma$ and Riemann's zeta function $\zeta$. Further, we use Euler's numbers which are defined via
$$
\operatorname{sech}(t) = \sum_{n=0}^\infty \frac{E_n}{n!} \cdot t^n,$$
vanishing for odd $n$.

\begin{theorem}
Let $\Xi(s):= \pi^{-(s+2)} \cdot \beta(s+2) \cdot \Gamma(s+2) \cdot \zeta(s)$. On the critical line, one has
\begin{align*}
\int_{1/2-i\cdot \infty}^{1/2 + i \cdot \infty} \Xi(s)\, \mathrm{d}s = i \left( \frac{\varpi}{\sqrt{2}} \cdot \mathbb{E}\left[X^2 \right] - \frac{\pi}{8} \right) = i \left( \frac{\varpi}{\sqrt{2}} \left(\frac{G}{2}\right)^2 - \frac{\pi}{8} \right) \approx&\ -i \cdot 0.0698111,
\end{align*}
with Gau\ss' constant $G = \varpi/\pi$ and the lemniscate constant $\varpi = \Gamma^2(1/4)/(2\sqrt{2\pi})$.
\end{theorem}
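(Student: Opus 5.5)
The integrand $\Xi$ should be, up to a factor $2$, the Mellin transform of the function whose value at $x=1$ is $\sum_{n\ge1} n^2\operatorname{sech}(\pi n)$. Mellin inversion then gives an integral over a line $\operatorname{Re}(s)=c>1$, and shifting that line to the critical line picks up the pole of $\zeta$ at $s=1$.

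\emph{Step 1 (Mellin transform).} I start from the classical formula
$$\int_0^\infty t^{w-1}\operatorname{sech}(t)\,\mathrm{d}t = 2\Gamma(w)\beta(w), \qquad \operatorname{Re}(w)>0,$$
obtained by expanding $\operatorname{sech}(t)=2\sum_{m\ge0}(-1)^m e^{-(2m+1)t}$ and integrating termwise. Substituting $t\mapsto \pi n x$ with $w=s+2$ gives
$$\int_0^\infty x^{s-1}(nx)^2\operatorname{sech}(\pi n x)\,\mathrm{d}x = 2\,n^{-s}\pi^{-(s+2)}\Gamma(s+2)\beta(s+2).$$
Set $\Psi(x):=\sum_{n\ge1}(nx)^2\operatorname{sech}(\pi n x)$. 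Summing over $n$, which is justified by absolute convergence for $\operatorname{Re}(s)>1$, yields
$$\int_0^\infty x^{s-1}\Psi(x)\,\mathrm{d}x = 2\,\Xi(s), \qquad \operatorname{Re}(s)>1.$$

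\emph{Step 2 (inversion).} Since $\Psi$ is smooth and decays rapidly, Mellin inversion at $x=1$ with some $c>1$ gives
$$\int_{c-i\infty}^{c+i\infty}\Xi(s)\,\mathrm{d}s=\pi i\,\Psi(1).$$
By Theorem \ref{thm_Var_1_b}(i) we have $\Psi(1)=\sum_{n\ge1}n^2\operatorname{sech}(\pi n)=\frac{G}{\sqrt2}\,\mathbb{E}[X^2]$. Hence $\pi\Psi(1)=\frac{\varpi}{\sqrt2}\mathbb{E}[X^2]$.

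\emph{Step 3 (contour shift).} I move the line of integration from $\operatorname{Re}(s)=c$ to $\operatorname{Re}(s)=1/2$. In the strip $1/2\le\operatorname{Re}(s)\le c$:
\begin{itemize}
\item $\Gamma(s+2)$ and $\beta(s+2)$ are holomorphic, since $\beta$ is entire and $\operatorname{Re}(s+2)>0$;
\item $\zeta$ has its only pole at $s=1$, with residue $1$.
\end{itemize}
The residue of $\Xi$ at $s=1$ is therefore
$$\pi^{-3}\beta(3)\Gamma(3)=\pi^{-3}\cdot\frac{\pi^3}{32}\cdot 2=\frac1{16}.$$
This gives
$$\int_{1/2-i\infty}^{1/2+i\infty}\Xi(s)\,\mathrm{d}s = \pi i\,\Psi(1)-2\pi i\cdot\frac1{16} = i\Bigl(\frac{\varpi}{\sqrt2}\,\mathbb{E}[X^2]-\frac{\pi}{8}\Bigr),$$
and inserting $\mathbb{E}[X^2]=(G/2)^2$ gives the stated closed form.

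\emph{Main obstacle.} The step needing care is the justification of the shift: the horizontal segments of the rectangle must contribute nothing as $|\operatorname{Im}(s)|\to\infty$. For this I will use Stirling's formula, $|\Gamma(\sigma+it)|\sim\sqrt{2\pi}\,|t|^{\sigma-1/2}e^{-\pi|t|/2}$. This exponential decay dominates the growth of the other factors in the strip:
\begin{itemize}
\item $\beta(s+2)$ is bounded there, since $\operatorname{Re}(s+2)\ge 5/2$;
\item $\zeta(s)$ grows at most polynomially, $O(|t|^{1/4+\varepsilon})$, by the convexity bound.
\end{itemize}
The same decay makes the integrals on both vertical lines absolutely convergent. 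This justifies both the Mellin inversion in Step 2 and the limiting rectangle argument in Step 3.
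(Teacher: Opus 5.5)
Your proposal is correct and follows essentially the same route as the paper. The paper likewise writes $\sum_{n\ge1} n^2\operatorname{sech}(\pi n)$ as a Mellin inversion integral: it expands $\operatorname{sech}$ geometrically, uses the Cahen--Mellin integral, and obtains the integrand $\Phi(s)=\Xi(s-2)$ on $\operatorname{Re}(s)=c>3$. It then picks up the residue $1/16$ from the pole of $\zeta$ and disposes of the horizontal edges with Stirling's formula, exactly as you do. The only differences are cosmetic: you work directly in the shifted variable, and you use the convexity bound for $\zeta$ where the paper cites Bourgain's subconvexity estimate; either bound suffices given the exponential decay of $\Gamma$.
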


\begin{proof}
Let $X\sim\mathsf{Sech}_{1,b}$ be a discrete random variable which is distributed according to the first bilateral hyperbolic secant distribution. We expand the second moment of $X$. Let $C_{1,b} = 1/(\sqrt{2}G)$. By definition, one has
$$
\mathbb{E}\left[X^2\right] = C_{1,b} \sum_{k \in \mathbb{Z}} k^2\cdot \operatorname{sech}\left(\pi k\right) = 2C_{1,b} \sum_{n=1}^\infty n^2 \cdot \operatorname{sech}(\pi n).
$$
For $x > 0$, one has $|\exp(-2x)| < 1$. Hence, the geometric series yields the identities
$$
\operatorname{sech}(x) = \frac{2e^{-x}}{1+e^{-2x}} = 2e^{-x} \sum_{k=0}^\infty (-1)^k e^{-2kx} = 2 \sum_{k=0}^\infty (-1)^k e^{-(2k+1)x}.
$$
Now, the gamma function motivates the introduction of the Mellin transform $\mathcal{M}$ on a strip $\mathbb{S} := \{0 < \Re(\sigma) < b\}$. For every $f \in L^1_{\mathrm{loc}}((\mathbb{S},\infty), \mathbb{C})$, it is defined as
$$
\mathcal{M}\left\{f\right\}(s) = \int_0^\infty x^{s-1} f(x) \, \mathrm{d}x, \quad \mathrm{with} \quad \mathcal{M}\left\{\left[x \mapsto e^{-x}\right]\right\} = \int_0^\infty x^{s-1}e^{-x} \, \mathrm{d}x = \Gamma(s).
$$
The gamma function is analytic on $\mathbb{S}$ and Stirling's formula
\begin{equation}\label{Stirlings_formula}
|\Gamma(\sigma + i \cdot t)| \in O\left(\sqrt{2\pi} \cdot |t|^{\sigma-1/2} e^{-|t| \pi/2}\right) (|t| \rightarrow \infty)
\end{equation}
holds uniformly for any $\sigma > 0$. Hence, the Mellin inversion theorem yields the existence of the Cahen-Mellin integral
$$
e^{-x} = \mathcal{M}^{-1}\left\{\Gamma\right\}(x) = \frac{1}{2\pi i} \int_{c-i \cdot\infty}^{c+i \cdot\infty} \Gamma(s) \cdot x^{-s} \, \mathrm{d}s.
$$
For any fixed $c>0$ we conclude
$$
\mathbb{E}\left[X^2\right] = 4C_{1,b} \sum_{n=1}^\infty n^2 \sum_{k=0}^\infty (-1)^k \frac{1}{2\pi i} \int_{c-i \cdot\infty}^{c+i \cdot\infty} \frac{\Gamma(s)}{(2k+1)^{s}} \cdot (\pi n)^{-s}\, \mathrm{d}s.
$$
Next, we want to interchange the limits. On the line $\Re(s)=c>3$, we get
$$
\left|(-1)^k \cdot \frac{\Gamma(s)\cdot n^{2-s}}{(\pi (2k+1))^{s}}\right| = \frac{\Gamma(c)\cdot n^{2-c}}{\left(\pi (2k+1)\right)^{c}}
$$
and by recalling Stirling's formula (\ref{Stirlings_formula}) we infer
$$
\int_{c-i \cdot\infty}^{c+i \cdot\infty} |\Gamma(s)|\, \mathrm{d}s < \infty, \quad \sum_{k=0}^\infty (2k+1)^{-c} < \infty, \quad \sum_{n=1}^\infty n^{2-c} < \infty.
$$
Thus, Tonelli's theorem results in
\begin{align*}
\mathbb{E}\left[X^2\right] =&\ \frac{4C_{1,b}}{2\pi i} \int_{c-i \cdot\infty}^{c+i \cdot\infty} \frac{\Gamma(s)}{\pi^s} \sum_{k=0}^\infty \frac{(-1)^k}{(2k+1)^{s}} \sum_{n=1}^\infty \frac{1}{n^{s-2}}\, \mathrm{d}s \\
=&\ \frac{4C_{1,b}}{2\pi i} \int_{c-i \cdot\infty}^{c+i \cdot\infty} \underbrace{\pi^{-s} \cdot \beta(s) \cdot \Gamma(s) \cdot \zeta(s - 2)}_{=:\, \Phi(s)} \, \mathrm{d}s.
\end{align*}

We want to solve the integral by the method of residues by shifting the vertical line of integration to the left. Therefore we consider the analytic continuation of $\beta$ and the meromorphic continuations of $\Gamma$, and $\zeta$. In the representation $\Phi(s)$ of the integrand, the simple poles of $\Gamma$ and the trivial zeros of $\zeta(\cdot  - 2)$ cancel each other out for even non-positive integers $s\in \{2-2n\}_{n\in \mathbb{N}}$. In addition, the simple poles of $\Gamma$ and the simple zeros of $\beta$ cancel each other out for odd negative integers $s \in \{1-2n\}_{n \in \mathbb{N}}$. Hence, all $\Gamma$-poles in $s\in\{-n\}_{n\in \mathbb{N}_0}$ are canceled out and only the simple pole at $s = \varrho = 3$ from $\zeta(\cdot - 2)$ remains with residue
$$
\operatorname{Res}_{\Phi}(\varrho = 3) = \pi^{-3} \cdot \beta(3) \cdot \Gamma(3) \cdot 1 = \frac{2!}{\pi^{3}} \beta(3) = \frac{- E_2}{4^2} = \frac{1}{4^2},
$$
since the identity
$$
\beta(r+1)=\frac{(-1)^{\frac{r}{2}} E_{r}}{2\cdot r!}\left(\frac\pi2\right)^{r+1}
$$
with Euler's numbers $E_r$ holds for even $r \in \mathbb{N}$ and one has $E_2 = -1$.\\

We introduce a new contour for the integral of $\Phi$. Let $c=3+\varepsilon$ for some $\varepsilon>0$ and let $R= R_{5/2,c}(\pm T)$ be the rectangle with vertices $c \pm i\cdot T$ and $5/2 \pm i \cdot T$. Then,by the residue theorem, one has
$$
\frac{4 C_{1,b}}{2\pi i}\int_{R} \Phi(s) \, \mathrm{d}s = 4 C_{1,b} \sum_{\varrho \in R} \operatorname{Res}_\Phi(\varrho) = \frac{C_{1,b}}{4},
$$
while
\begin{align*}
&\ \frac{4 C_{1,b}}{2\pi i}\int_{c-i\cdot T}^{c+i\cdot T} \Phi(s) \mathrm{d}s\\
=&\ \frac{4 C_{1,b}}{2\pi i} \left[\underbrace{\int_{5/2 + i\cdot T}^{c + i\cdot T} \Phi(s) \mathrm{d}s}_{=: I_1} + \underbrace{\int_{5/2-i\cdot T}^{5/2+i\cdot T} \Phi(s) \mathrm{d}s}_{=: I_2} + \underbrace{\int_{5/2 - i \cdot T}^{c- i \cdot T} \Phi(s) \mathrm{d}s}_{=: I_3}\right] + \frac{C_{1,b}}{4}.
\end{align*}

We show that the integrals over the horizontal edges vanish: For $\sigma > 1$, the beta function converges absolutely. Hence, one has
$$
\lvert I_1\rvert, \lvert I_3\rvert \le \int_{5/2 \pm i\cdot T}^{c \pm i\cdot T} \lvert \pi^{-s} \beta(s) \Gamma(s) \zeta(s-2) \rvert \, \mathrm{d}s \lesssim \int_{5/2 \pm i\cdot T}^{c \pm i\cdot T} \lvert \Gamma(s) \rvert \cdot \zeta(\sigma-2) \, \mathrm{d}s.
$$
The zeta term grows at most exponentially and Stirling's formula (\ref{Stirlings_formula}) results in 
$$\lvert \Phi(\sigma + i \cdot T)\rvert \in O\left(T^{\sigma/2} e^{-T\pi/2}\right)(T \rightarrow \infty).$$
Therefore, $\sup_{\sigma \in [5/2, 3+\varepsilon]}\lvert \Phi(\sigma + i \cdot T) \rvert \rightarrow 0$ as $T \rightarrow \infty$ and $I_1$ and $I_3$ uniformly vanish in the limit. Now, we consider the shifted function 
$$\Xi(s):= \Phi(s+2) = \pi^{-(s+2)} \cdot \beta(s+2) \cdot \Gamma(s+2) \cdot \zeta(s) $$ 
on the line $\Re(s)=1/2.$ First, we show that $\lim_{T \rightarrow \infty} I_2$ exists. The beta function converges absolutely for $\sigma > 1$. Further, Bourgain showed in \cite{bourgain_2016}, that
$$
\zeta(1/2 + i \cdot t) \in O\left(\lvert t\rvert^{\frac{13}{84}+\delta}\right) (\lvert t\rvert \rightarrow \infty)
$$
for any $\delta > 0$. Together with Stirling's formula, those facts result in
$$\Xi\left(\frac{1}{2} \pm i \cdot T\right) \in O\left(T^{2 + \frac{13}{84} + \delta} e^{-T \pi/2}\right) (T\rightarrow\infty).$$
Hence, due to the exponential decay, the expression 
$$\mathbb{E}\left[X^2 \right] = \frac{2 C_{1,b}}{\pi i} \int_{1/2-i\cdot \infty}^{1/2 + i \cdot \infty} \Xi(s) \mathrm{d}s + \frac{C_{1,b}}{4}$$
is well-defined. Now, Theorem \ref{thm_Var_1_b} says $\mathbb{E}\left[X^2\right] = (G/2)^2$. Hence, in terms of the lemniscate constant $\varpi$ and the lemniscate modulus $k=1/\sqrt{2}$, we get the formula
\begin{align*}
\int_{1/2-i\cdot \infty}^{1/2 + i \cdot \infty} \Xi(s) \mathrm{d}s = i \left( \frac{\varpi}{\sqrt{2}} \cdot \mathbb{E}\left[X^2 \right] - \frac{\pi}{8} \right)= i \left( \frac{\varpi}{\sqrt{2}} \left(\frac{G}{2}\right)^2 - \frac{\pi}{8} \right),
\end{align*}
as claimed.
\end{proof}

\end{document}